\newtheorem{theorem}{Theorem}
\newtheorem{lemma}[theorem]{Lemma}
\newtheorem{definition}[theorem]{Definition}
\newtheorem{remark}[theorem]{Remark}
\numberwithin{theorem}{section}
\renewcommand{\phi}{\varphi}
\renewcommand{\epsilon}{\varepsilon}
\newcommand{\arctanh}{\rm arctanh}
\begin{document}

\title[Magnetic billiards]
      {On Totally integrable magnetic billiards on constant curvature surface}

\date{9 August 2012}
\author{Misha Bialy}
\address{School of Mathematical Sciences, Raymond and Beverly Sackler Faculty of Exact Sciences, Tel Aviv University,
Israel} \email{bialy@post.tau.ac.il}
\thanks{Partially supported by ISF grant 128/10}

\subjclass[2000]{ } \keywords{Mirror formula, Magnetic Billiards,
Hopf rigidity}

\begin{abstract}We consider billiard ball motion in
a convex domain of a constant curvature surface influenced by the
constant magnetic field. We prove that if the billiard map is
totally integrable then the boundary curve is necessarily a circle.
This result is a manifestation of the so-called Hopf rigidity
phenomenon which was recently obtained for classical billiards on
constant curvature surfaces.
\end{abstract}

\maketitle



\section{Introduction and the result}
Let $S$ be a surface of constant curvature $K=0, \pm 1$. Let
$\gamma$ be a simple closed convex curve on $S$ of class $C^2$. We
shall denote by $k$ the geodesic curvature of $\gamma$ and assume it
is strictly positive everywhere. We consider the so-called magnetic
billiard inside $\gamma$ where the magnitude of the magnetic field
is assumed to be constant $\beta\geq 0$. This means that the
billiard ball between elastic reflections from the boundary moves
with a unite speed along curves of constant geodesic curvature
$\beta$. The model of magnetic billiard was extensively studied (see
incomplete list
\cite{BR}\cite{Berg}\cite{Ta}\cite{T}\cite{GT}\cite{T2}\cite{GB}\cite{GSG}\cite{GE}).
Let me summarize the basic facts on the magnetic billiards which
will be omitted. First of all the main geometric assumption which
assures that the dynamics is well defined is the following
$$\beta < \min_{x\in \gamma}k(x),$$
saying that the field is not too large relative to the geodesic
curvature of the boundary. In this case magnetic billiard ball map
defines a smooth map $T$ of the phase cylinder cylinder
$\Omega=\gamma\times(0,\pi)$, where we shall denote by $x\in [0,P)$
the arc-length coordinate along $\gamma$ and $\phi\in(0,\pi)$ is the
inward angle. Moreover $T$ is a symplectic twist map, the form
$dx\wedge d(\cos\phi)$ is preserved. Remarkably, this is the same
form which appears for classical billiards. We shall denote
$d\mu=\sin\phi dx d\phi$ the invariant measure.

An important question starting from \cite{BR} is when magnetic
billiard map is integrable. The only known example of integrable
magnetic billiard is the circular billiard, in contrast to the
classical case where for any constant curvature surface ellipses are
integrable also (see \cite{V}).

We shall adopt the following definition of suggested by Andreas
Knauf for geodesic flow on the torus (\cite{K}):

\begin{definition}
The billiard ball map $T$ is called totally integrable if through
every point of the phase cylinder $\Omega=\gamma\times(0,\pi)$
passes a closed non-contractible curve which is invariant under the
map $T$.
\end{definition}
Our main result is the following theorem:
\begin{theorem}
\label{main} If the magnetic billiard map is totally integrable then
$\gamma$ must be a circle.
\end{theorem}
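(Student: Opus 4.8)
The plan is to transplant E.~Hopf's rigidity method, in the form used for classical billiards, to the magnetic setting, the main analytic device being a magnetic mirror formula combined with an integral estimate against the invariant measure $d\mu=\sin\phi\,dx\,d\phi$.

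First I would use total integrability to organize the phase space. Since $T$ is a symplectic twist map and every point of $\Omega$ lies on a closed non-contractible invariant curve, each such curve is a Lipschitz graph $\phi=\phi(x)$ over the base circle $[0,P)$ by the Birkhoff theory of twist maps. Hence $\Omega$ is foliated by graphs, the slope field of this foliation is defined $\mu$-almost everywhere and is $T$-invariant, and I would encode the transverse (Jacobi) behaviour of the foliation by a focusing variable $b$ carried along each orbit.

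Next I would derive the magnetic mirror formula. Between two consecutive reflections the trajectory is an arc of constant geodesic curvature $\beta$ on the surface $S$ of constant curvature $K$; an infinitesimal beam of such arcs obeys the Jacobi equation on $S$, which has constant coefficients and is explicitly solvable, so the focusing variable propagates along each arc by an explicit M\"obius transformation determined by $\beta$, $K$ and the arc length. At the reflection point the reflection law produces a mirror equation tying the incoming and outgoing focusing distances $a^{-},a^{+}$ to the geodesic curvature $k(x)$ and the angle $\phi$, schematically $\frac{1}{a^{-}}+\frac{1}{a^{+}}=\frac{2k(x)}{\sin\phi}$ up to explicit $\beta$- and $K$-corrections. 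Composing the arc propagation with the reflection yields a Riccati-type recursion for $b$ along any orbit lying on a leaf.

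Finally I would integrate. Because the foliation and the measure $\mu$ are $T$-invariant, the focusing variable is a single-valued function on $\Omega$, so the ``propagation'' contributions in the Riccati recursion telescope when summed around the dynamics and integrated over $d\mu$, and their total integral vanishes by invariance. What remains is a sign-definite integral coming from a pointwise Cauchy--Schwarz (equivalently AM--GM) estimate applied to the mirror equation, and its vanishing forces pointwise equality. This equality holds exactly when $k(x)$ is constant along $\gamma$, and a simple closed curve of constant geodesic curvature on a surface of constant curvature $K$ is necessarily a circle, which proves Theorem~\ref{main}. The hard part will be the derivation and correct bookkeeping of the magnetic mirror formula: in contrast with the geodesic case the magnetic flow is not time-reversible, so the mirror equation need not be symmetric in $a^{-}$ and $a^{+}$, and one must check that the extra $\beta$- and $K$-dependent terms recombine, after integration, into a single sign-definite expression whose equality locus is exactly $\{k\equiv\mathrm{const}\}$; verifying that the telescoping is genuinely exact, with no boundary defect from the $\phi\to 0,\pi$ ends of the cylinder, is the second delicate point.
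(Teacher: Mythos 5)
Your setup---Birkhoff invariant graphs from total integrability, a magnetic mirror equation governed by the effective curvature $K+\beta^2$, a pointwise convexity (AM--GM) estimate, and integration against the invariant measure $d\mu=\sin\phi\,dx\,d\phi$ so that the focusing terms telescope---matches the paper's strategy up to the integration step. But your closing step has a genuine gap. After telescoping, all you obtain is a one-sided inequality
$$\int_\Omega l\ d\mu\ \geq\ \int_0^P\!\!\int_0^\pi F\bigl(k(x),\phi,\beta\bigr)\sin\phi\ d\phi\,dx,$$
where $F$ is the explicit lower bound produced by the convexity estimate. Invariance of $\mu$ kills only the $a$-terms; it does not force this inequality to be an equality, so the ``sign-definite integral'' you want to vanish has no reason to vanish, and you cannot conclude pointwise equality. (Even if you could, equality in the convexity step says $a(x,\phi)=l(x_{-1},\phi_{-1})-a(x_{-1},\phi_{-1})$, a statement about the invariant foliation, not directly that $k$ is constant.) This is precisely where the discrete billiard problem differs from the continuous-time Hopf argument you are modeling: there is no Gauss--Bonnet-type identity that evaluates your integral and closes the loop by itself.

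The paper finishes with three ingredients absent from your proposal. First, the magnetic Santalo formula $\int_\Omega l\ d\mu=2\pi A$, valid independently of $\beta$, evaluates the left-hand side. Second, a key lemma shows the inner integral $I(x)=\int_0^\pi F\sin\phi\,d\phi$ is independent of $\beta$: for a circular boundary the inequality is an equality, so by rotational symmetry and Santalo $I(x)=2\pi A(x)/P$, where $A(x)$ is the area enclosed by the circle of geodesic curvature $k(x)$; this permits setting $\beta=0$ and avoids an intractable computation. Third, Gauss--Bonnet and Cauchy--Schwarz convert the resulting $\beta$-free inequality into the reverse of the isoperimetric inequality on $S$ (e.g.\ $0\geq P^2+A^2-4\pi A$ on the sphere, $A^2+4\pi A-P^2\geq 0$ on the hyperbolic plane), and it is the equality case of the isoperimetric inequality---not equality in the mirror estimate---that forces $\gamma$ to be a circle. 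You would also need the case analysis on the hyperbolic plane: for $\beta\in(0,1)$ the effective curvature is negative, the relevant function is $\coth$ rather than $\cot$, and one must first prove $k(x)\geq 1$ (by comparing the minimum $\sqrt{k^2-\beta^2}$ of the right-hand side of the mirror equation with the lower bound $\sqrt{1-\beta^2}$ of the left-hand side) before the comparison circle of curvature $k(x)$ even exists.
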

\begin{remark}
 A more general result can be proved using the notion of conjugate
points of twist maps (\cite {B},\cite{B1}). The more general
statement is the following: any magnetic billiard on a constant
curvature surface $S$ which has no conjugate points is circular
billiard.
\end{remark}
\begin{remark}
 In view of the previous remark one can consider this result as a
 magnetic
billiard analog of Hopf's theorem on tori without conjugate points.
It was proved in \cite{B3} that Hopf type rigidity holds true also
for magnetic geodesic flows on tori, provided the metric is
conformally flat. Notice that the magnetic field in \cite{B3} is not
assumed to be constant. In higher dimensions it is not known if the
conformal flatness assumption can be relaxed.
\end{remark}

\section{Magnetic versions of Santalo and mirror formula}
 One of the key observations for the result of theorem \ref{main} is the fact that the classical
Santalo formula  for geodesics (for the proof see \cite{Ch}) remains
the same for magnetic geodesics as follows (it was known already to
Santalo for horocycles;we need a very particular case, and refer to
\cite{GT} and \cite{T}) for the proof of the most general case):

\begin{lemma}{("Magnetic" Santalo formula)}
Let $l(x,\phi)$ be the length of the magnetic geodesic starting at
the point $x \in \gamma$ with the inward angle $\phi \in (0,\pi)$
with the boundary. Then the integral over the phase cylinder with
respect to the invariant measure $d\mu=dx\ d(-\cos \phi)$
$$\int l(x,\phi)d\mu=2\pi A,$$
independently of the magnitude of the magnetic field $\beta$. Here
$A$ is the area of the billiard domain.
\end{lemma}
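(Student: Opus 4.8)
The plan is to recognize the magnetic Santalo formula as a special case of the general coarea (flow-out) identity for a volume-preserving flow with a global cross-section, and to isolate the single place where the magnetic field enters. Let $M$ be the domain bounded by $\gamma$, and let $SM$ be its unit tangent bundle with coordinates $(q,\theta)$ ($q\in M$, $\theta$ the direction angle) and Liouville $3$-form $\Lambda=dA\wedge d\theta$, where $dA$ is the area form of $S$. The magnetic motion is the flow $\Phi_t$ of the vector field $X=H+\beta\,\partial_\theta$, where $H$ is the geodesic spray and $\partial_\theta$ is the vertical field rotating directions at unit rate; its orbits are exactly the unit-speed curves of geodesic curvature $\beta$. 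Since $L_{\partial_\theta}\Lambda=0$ and, by the classical Liouville theorem, $L_H\Lambda=0$, we get $L_X\Lambda=0$: the magnetic flow preserves $\Lambda$. Consequently the total phase volume
$$\int_{SM}\Lambda=\int_M\!\!\int_0^{2\pi}dA\,d\theta=2\pi A$$
is already independent of $\beta$.

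Next I would set up the flow-out decomposition. Regard the phase cylinder $\Omega=\gamma\times(0,\pi)$ as the set of inward unit vectors based on $\gamma$; under the standing assumption $\beta<\min_{\gamma}k$ every interior line element lies on a unique magnetic segment issued inward from $\gamma$ and returning to $\gamma$ after length $l(x,\phi)$, so the map $(x,\phi,t)\mapsto\Phi_t$, with $0<t<l(x,\phi)$, parametrizes $SM$ up to a null set. For a volume-preserving flow the pullback of $\Lambda$ under this map equals $dt\wedge\bigl(\iota_X\Lambda\bigr)\big|_{\Omega}$, whence
$$2\pi A=\int_{SM}\Lambda=\int_{\Omega}\Bigl(\int_0^{l(x,\phi)}dt\Bigr)\,\bigl(\iota_X\Lambda\bigr)\big|_{\Omega}=\int_{\Omega}l(x,\phi)\,\bigl(\iota_X\Lambda\bigr)\big|_{\Omega}.$$
It remains only to identify the Santalo measure $\bigl(\iota_X\Lambda\bigr)\big|_{\Omega}$ on the cross-section.

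The decisive computation, where I expect the real content to sit, is this restriction. Splitting $\iota_X\Lambda=\iota_H\Lambda+\beta\,\iota_{\partial_\theta}\Lambda$ and using $\iota_{\partial_\theta}\Lambda=dA$, the entire magnetic contribution $\beta\,\iota_{\partial_\theta}\Lambda=\beta\,dA$ restricts to zero on $\Omega$, because $\Omega$ projects onto the curve $\gamma$ and the pullback of the area form to a curve vanishes identically. This is exactly the mechanism forcing $\beta$-independence. For the surviving geodesic term, writing $v$ for the base projection of $H$ (the unit velocity) and $\theta=\alpha(x)+\phi$ with $\alpha$ the direction of the boundary tangent, one has $\iota_H\Lambda=(\iota_v dA)\wedge d\theta$ on $\Omega$; there the $1$-form $\iota_v dA$ evaluates to $\sin\phi$ on the boundary direction while annihilating the vertical direction $\partial_\phi$, and since the $dx\wedge dx$ part of $(\iota_v dA)\wedge d\theta$ drops, this gives $\bigl(\iota_X\Lambda\bigr)\big|_{\Omega}=\sin\phi\,dx\wedge d\phi=dx\,d(-\cos\phi)=d\mu$. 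Substituting into the displayed identity yields $\int_\Omega l\,d\mu=2\pi A$. Thus the only obstacle is the verification that the cross-sectional measure collapses to the classical one with the magnetic term dropping out; once $\iota_{\partial_\theta}\Lambda=dA$ is seen to vanish on the boundary section, the remainder is the classical Santalo computation applied verbatim, which also explains why the value $2\pi A$ does not feel the magnetic field at all.
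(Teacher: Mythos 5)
Your proposal is correct, but note that the paper does not actually prove this lemma: it explicitly relegates it to the literature, citing \cite{GT} and \cite{T} (and \cite{Ch} for the classical case), so there is no internal proof to compare against. What you have written is essentially the standard Santal\'o-type argument that those references contain, adapted cleanly to the magnetic setting: you decompose the magnetic generator as $X=H+\beta\,\partial_\theta$, observe that both summands preserve the Liouville volume $\Lambda=dA\wedge d\theta$ (so the total phase volume $2\pi A$ is $\beta$-independent), and then run the flow-out/coarea identity over the boundary cross-section $\Omega$. The genuinely clarifying point in your write-up --- and the one the paper's phrase ``independently of the magnitude of the magnetic field'' silently relies on --- is that the magnetic contribution $\beta\,\iota_{\partial_\theta}\Lambda=\beta\,dA$ pulls back to zero on $\Omega$ because $\Omega$ projects into the one-dimensional curve $\gamma$; this is exactly why the cross-sectional measure remains the classical $\sin\phi\,dx\wedge d\phi$ and why the answer cannot feel $\beta$. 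One assertion you should not leave implicit: that under $\beta<\min_{\gamma}k$ the flow-out map $(x,\phi,t)\mapsto\Phi_t$ covers $SM$ up to a null set. This requires that no complete Larmor circle (curve of constant geodesic curvature $\beta$) fits inside the domain; it follows from the Blaschke rolling theorem (curvature $k\ge k_0>\beta$ forces the domain into a disk of radius $1/k_0$, too small to contain a circle of radius $1/\beta$), with analogous statements on the sphere and hyperbolic plane. Without this, injectivity plus surjectivity of the parametrization --- hence the equality $\int_\Omega l\,d\mu=2\pi A$ rather than a mere inequality $\le$ --- is unjustified. Since the paper treats such points as ``basic facts which will be omitted,'' this is a gap of the same order as the paper's own omissions, but in a self-contained proof it is the one step that genuinely uses the standing geometric assumption.
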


Next recall the Mirror formula for usual billiard on a surface $S$
of constant curvature $K$. It reads
$$\frac {Y^{'}}{Y}(a)+\frac{Y^{'}}{Y}(b)=\frac{2k(x)}{\sin\phi},$$
where $Y$ denotes the orthogonal Jacobi field along geodesics on the
surface $S$ satisfying initial conditions $Y(0)=0, Y^{'}(0)=1$. Here
$x$ is a point on the mirror $a$ is a distance from a point $A$
inside the domain to the point $x$ along the shortest ray and $b$ is
a distance along the reflected ray to the point $B$ where the
focusing of the reflected beam occurs, $\phi$ is the angle of
reflection.

It is well known that the presence of the magnetic field results in
adding to the curvature $K$ the term $\beta^2$, so that the Jacobi
field $Y$ should be changed in the Mirror formula to $Y_{\beta}$
where:
\begin{eqnarray*}
Y_{\beta}&=&\frac{1}{\sqrt{K+\beta^2}}\sin(\sqrt{K+\beta^2}t),\  for
\ K+\beta^2>0, \\
Y_{\beta}&=&t, \ for\
K+\beta^2=0,\\
Y_{\beta}&=&\frac{1}{\sqrt{-(K+\beta^2})}\sinh(\sqrt{-(K+\beta^2})t),
 \ for \ K+\beta^2<0.
\end{eqnarray*}

 There is also a change on the right hand side
 of the Mirror formula so that for any $K=0,\pm1$ the formula reads
 as follows:
\begin{equation}
\label {mequation} \frac
{Y_{\beta}^{'}}{Y_{\beta}}(a)+\frac{Y_{\beta}^{'}}{Y_{\beta}}(b)=\frac{2(k(x)-\beta\cos\phi)}{\sin\phi}
\end{equation}

The main step in the proof of theorem \ref{main} is the reduction to
the case of non-magnetic billiard on surface which was treated in
\cite{B}. This is done in the following way. First exactly as it was
for non-magnetic billiards we have

\begin{theorem}
\label{a} If the billiard is totally integrable (or more generally
has no conjugate points), then there exists a measurable  function
on the phase cylinder $a: \Omega\rightarrow\mathbf{R}$ such that
$0<a(x,\phi)<l(x,\phi)$ which satisfies the mirror equation:
\begin{equation} \frac{Y_{\beta}^{'}}{Y_{\beta}}(a(x,\phi))+\frac{Y_{\beta}^{'}}{Y_{\beta}}(
l(x_{-1},\phi_{-1})-a(x_{-1},\phi_{-1}))=\frac{2(k(x)-\beta\cos\phi)}{\sin\phi},
\label{mirror}
\end{equation}
where $l(x,\phi)$ denotes the length of the magnetic geodesic
segment which starts at the point $x$ of $\gamma$ with the inward
angle $\phi)$.
\end{theorem}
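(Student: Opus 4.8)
The plan is to prove Theorem \ref{a} by constructing the function $a(x,\phi)$ directly from the invariant foliation guaranteed by total integrability (or, more generally, from the absence of conjugate points). Let me sketch how I would proceed.

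=== PROOF PROPOSAL ===

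\textbf{Setup and strategy.} The plan is to follow closely the argument for non-magnetic billiards in \cite{B}, reducing the magnetic case to the classical one by exploiting the fact that the only change in the dynamics is the replacement of the Jacobi field $Y$ by $Y_\beta$ and the appearance of the $\beta\cos\phi$ correction on the right-hand side of the mirror formula \eqref{mequation}. The key geometric object is the invariant foliation: by total integrability, through each point $(x,\phi)\in\Omega$ passes a closed non-contractible invariant curve, and these curves foliate (a full measure subset of) the phase cylinder. I would first extract from this foliation, at each point, a well-defined invariant direction (the tangent line to the leaf through the point), which plays the role of an invariant Lagrangian subbundle along the orbit. The quantity $a(x,\phi)$ will be the focusing distance of the infinitesimal beam carried by this invariant direction, measured forward along the magnetic geodesic segment issuing from $(x,\phi)$.

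\textbf{Main steps.} First I would linearize the billiard map $T$ along an orbit and describe the transport of an infinitesimal beam of magnetic geodesics between consecutive reflections. Between reflections the beam evolves by the magnetic Jacobi equation, whose solution is governed by $Y_\beta$; the ratio $Y_\beta'/Y_\beta$ evaluated at the appropriate distance encodes the current focusing parameter of the beam. At a reflection the beam parameter transforms according to the mirror law \eqref{mequation}. Second, I would show that the invariant direction given by the foliation, when transported along the orbit, produces at each point a beam whose incoming focusing distance (measured backward from $(x,\phi)$ along the previous segment) and outgoing focusing distance (measured forward) satisfy exactly the reflection relation. Concretely, define $a(x,\phi)$ as the distance from $x$ to the focal point of the outgoing beam determined by the invariant leaf; then the incoming beam at $(x,\phi)$ is the image under $T$ of the outgoing beam at $(x_{-1},\phi_{-1})$, so its backward focusing distance is $l(x_{-1},\phi_{-1})-a(x_{-1},\phi_{-1})$. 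Substituting these two distances into the mirror formula \eqref{mequation} with $Y_\beta$ yields precisely \eqref{mirror}. Third, the bound $0<a(x,\phi)<l(x,\phi)$ follows from the twist and monotonicity properties of $T$ together with the no-conjugate-points hypothesis: the invariant direction is nowhere vertical (otherwise one would produce conjugate points along the orbit), which forces the focal point to lie strictly between consecutive reflection points. Measurability of $a$ is inherited from the measurability of the invariant foliation.

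\textbf{Main obstacle.} The delicate point is establishing that the invariant direction is everywhere transverse to the vertical (i.e.\ that $a$ stays strictly inside the open interval $(0,l)$) rather than degenerating; this is exactly where the no-conjugate-points assumption enters, and total integrability must be shown to imply it. The argument parallels the classical one, but one must verify that the monotonicity of $Y_\beta'/Y_\beta$ as a function of the focusing distance — which holds for each sign of $K+\beta^2$ by the explicit formulas — is enough to guarantee that the transported invariant direction never reaches a focal degeneracy in a way incompatible with the closed invariant curve. I expect the bulk of the remaining work to be a routine translation of the beam-transport computation from \cite{B}, with $Y$ replaced by $Y_\beta$ throughout, while the genuinely load-bearing step is the interpretation of the invariant foliation as a conjugate-point-free Jacobi field and the resulting two-sided strict inequality for $a$.
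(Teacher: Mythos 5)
Your proposal takes essentially the same route as the paper: the paper explicitly omits this proof, stating it is analogous to the non-magnetic case treated in \cite{B}, and your outline --- extracting a measurable invariant direction field from the invariant curves (or from the absence of conjugate points), reading off the focusing distance $a(x,\phi)$ of the corresponding infinitesimal beam, and propagating it through the reflection law \eqref{mequation} with $Y$ replaced by $Y_{\beta}$ --- is precisely that translation. The points you flag as load-bearing (non-verticality of the invariant direction, hence the strict bounds $0<a<l$, and measurability inherited from the foliation) are exactly the ones carrying the argument in \cite{B}.
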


The proof of this theorem is analogouse to the non-magnetic case and
it is omitted.

In the sequel we shall distinguish between the cases of the Plane,
$K=0$; of the Sphere $K=1$ and the Hyperbolic plane $K=-1$. In the
last case three subcases naturally appear: $\beta>1$; $ \beta=1$ and
$\beta\in(0,1)$.

\section{Planar and Spherical magnetic billiards}
For the Plane and the Sphere the mirror equation reads:

\begin{multline}
\label{mirrorplane} \sqrt{K+\beta^2}\left(\cot
(\sqrt{K+\beta^2}a(x,\phi))+\cot
(\sqrt{K+\beta^2}(l(x_{-1},\phi_{-1})-a(x_{-1},\phi_{-1}))\right)= \\
=\frac{2(k(x)-\beta \cos\phi)}{\sin{\phi}}
\end{multline}

Notice that the geometric assumption $\beta < \min_{x\in
\gamma}k(x)$ implies that the right hand side is always positive and
hence
$$\sqrt{K+\beta^2}(a(x,\phi)+l(x_{-1},\phi_{-1})-a(x_{-1},\phi_{-1}))<\pi$$
so that the lemma of \cite {B} can be applied to get the inequality

\begin{multline*}
\sqrt{K+\beta^2}\cot
\frac{\sqrt{K+\beta^2}[a(x,\phi)+l(x_{-1},\phi_{-1})-a(x_{-1},\phi_{-1})]}{2}\leq
 \\ \leq \frac
{k(x)-\beta\cos\phi}{\sin\phi}.
\end{multline*}


This can be written in equivalent way:

\begin{equation}\label{ala}
 a(x,\phi)+l(x_{-1},\phi_{-1})-a(x_{-1},\phi_{-1})\geq
\frac{2}{\sqrt{K+\beta^2}}\arctan\frac
{\sqrt{K+\beta^2}\sin\phi}{k(x)-\beta\cos\phi}
\end{equation}

Integrate (\ref {ala}) with respect to the invariant measure  $d
\mu=\sin\phi\ dxd\phi$. We get

\begin{equation}
\label{independent} \int l\ d\mu\ \geq\ \int_0^P dx\int_0^{\pi}
\frac{2}{\sqrt{K+\beta^2}}\arctan\frac
{\sqrt{K+\beta^2}\sin\phi}{k(x)-\beta\cos\phi}\ \sin\phi d\phi.
\end{equation}
For a given $x$ denote by $I(x)$ the inner integral in the right
hand side of (\ref{independent}). Then we have:
\begin{lemma}
The integral $I(x)$ in (\ref{independent}) does not depend on
$\beta$ and equals to $2\pi A(x)/P$, where $A(x)$ is the Area of the
circle on the surface $S$ having geodesic curvature $k(x)$.
\end{lemma}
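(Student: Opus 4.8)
The plan is to recognize the integrand of $I(x)$ as the chord-length function of a genuine circular magnetic billiard and then to read off its integral from the magnetic Santalo formula of \S2. Fix $x$, abbreviate $k=k(x)$ and $c=\sqrt{K+\beta^2}$, and let $\Gamma$ be the geodesic circle on $S$ whose constant geodesic curvature equals $k$. Since $\beta<k$ by the standing assumption, the magnetic billiard inside $\Gamma$ is well defined; by rotational symmetry the length $l_\beta(\phi)$ of the magnetic geodesic chord issued at inward angle $\phi$ depends only on $\phi$, and the billiard map preserves $\phi$.

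First I would compute $l_\beta(\phi)$ from the mirror formula. For the rotationally symmetric circle every quantity in \eqref{mirror} is independent of the arc-length variable and $\phi_{-1}=\phi$, so the symmetric solution is $a=l_\beta/2$ and \eqref{mequation} collapses to $\frac{Y'_\beta}{Y_\beta}(l_\beta/2)=\frac{k-\beta\cos\phi}{\sin\phi}$. In the range relevant to this section one has $K+\beta^2\ge 0$, whence $Y_\beta(t)=\frac1c\sin(ct)$ and $Y'_\beta/Y_\beta=c\cot(ct)$ (the flat case $K=\beta=0$ being the limit $c\to0$); solving for $l_\beta$ gives
\begin{equation*}
l_\beta(\phi)=\frac{2}{\sqrt{K+\beta^2}}\,\arctan\frac{\sqrt{K+\beta^2}\,\sin\phi}{k-\beta\cos\phi},
\end{equation*}
which is exactly the expression integrated against $\sin\phi\,d\phi$ in $I(x)$. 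Equivalently, the circular billiard saturates the inequality \eqref{ala}, as it must.

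Next I would apply the magnetic Santalo formula to the billiard inside $\Gamma$. Its phase cylinder is $[0,L)\times(0,\pi)$ with invariant measure $\sin\phi\,dx\,d\phi$, where $L$ is the circumference of $\Gamma$, and the formula yields $\int l_\beta\,d\mu=2\pi A(x)$, with $A(x)$ the enclosed area. Because $l_\beta$ does not depend on the arc-length variable, the left-hand side factors as $L\int_0^\pi l_\beta(\phi)\sin\phi\,d\phi=L\cdot I(x)$. Hence $I(x)=2\pi A(x)/L$, which is the asserted value. The independence of $\beta$ is now conceptually transparent: $A(x)$ and $L$ are determined solely by $K$ and $k(x)$, while every trace of $\beta$ has been absorbed into the already $\beta$-free Santalo constant $2\pi A(x)$.

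The step I expect to require the most care is the explicit solution for $l_\beta(\phi)$ together with the verification that the symmetric configuration $a=b=l_\beta/2$ is the correct one and that the boundary behaviour as $\phi\to0,\pi$ causes no trouble. As a self-contained cross-check that avoids the geometric identification, one may integrate $I(x)$ by parts using $\int\sin\phi\,d\phi=-\cos\phi$; the boundary terms vanish and $I(x)$ reduces to the elementary rational-trigonometric integral
\begin{equation*}
I(x)=\int_0^\pi \frac{2(k\cos\phi-\beta)\cos\phi}{k^2+\beta^2-2k\beta\cos\phi+K\sin^2\phi}\,d\phi,
\end{equation*}
in which the factor $\sqrt{K+\beta^2}$ has cancelled. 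Differentiating the right-hand side in $\beta$ and checking that the derivative vanishes re-proves the $\beta$-independence directly, and evaluating at $\beta=0$ identifies the common value with $2\pi A(x)/L$.
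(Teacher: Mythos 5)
Your proposal is correct and follows essentially the same route as the paper: both apply the magnetic Santalo formula to the comparison circle of geodesic curvature $k(x)$ and use rotational symmetry to factor the phase-space integral as (circumference)$\,\times\,I(x)$, the key point being that the circular billiard saturates the inequality so that the integrand of $I(x)$ is exactly the chord length $l_\beta(\phi)$. Your write-up is in fact more explicit than the paper's (which merely asserts ``one can easily see that there is equality''), since you derive $l_\beta(\phi)$ from the symmetric solution $a=l_\beta/2$ of the mirror equation and supply a computational cross-check, but the underlying argument is identical.
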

\begin{proof}

 By "magnetic" Santalo formula, the integral on the left hand side
equals $2\pi A$ independently of the magnetic field $\beta$. I claim
that this fact implies without any additional calculations  that the
inner integral on the right hand side is independent on $\beta$
also. Indeed, if the boundary curve is a circle of constant geodesic
curvature $k$ on $S$ then one can easily see that there is equality
in (\ref{independent}). Moreover due to the rotational symmetry
(\ref{independent}) leads to the following equality for the circle
of curvature $k$:

$$2\pi A= P
\int_0^{\pi}\frac{2}{\sqrt{K+\beta^2}}\arctan\frac
{\sqrt{K+\beta^2}\sin\phi}{k-\beta\cos\phi}\ \sin\phi d\phi,
$$

So that the inner integral in (\ref{independent}) equals $2\pi/A(x)$
independently of $\beta$. This proves the claim. (Of course one
could compute for any $\beta$ the integral, but this is challenging
even for MATHEMATICA.)
\end{proof}
 Using the independence on $\beta$ we can compute the right hand side of
(\ref{independent}) putting $\beta=0$. But then the inequality
becomes identical to one obtained in a non-magnetic case \cite{B}.
Namely consider first the case of the Sphere, $K=1$. We have
\begin{multline}
2\pi A \geq\int_0^P dx\int_0^{\pi}2
\arctan\left(\frac{\sin\phi}{k(x)}\right)\sin\phi\ d\phi=\\
=4\int_0^P k(x)\int_0^{\pi/2}\frac{\cos^2\phi}{k^2(x)+\sin^2\phi}\
d\phi=\\
=2\pi\int_0^P(\sqrt{k^2(x)+1}-k(x))dx.
\end{multline}
This inequality implies  \cite{B} that $\gamma$ must be a circle.
This done by the following argument: Use Gauss-Bonnet to write it in
the form
$$
A\geq \int_0^P\sqrt{k^2(x)+1}\ dx-(2\pi-A),
$$
which leads to
$$
\int_0^P\sqrt{k^2(x)+1}\ dx\leq 2\pi.
$$
Denote this integral by $I$. On the other hand by Cauchy Schwartz
one has
$$
\int_0^P(\sqrt{k^2(x)+1}+1)\ dx\  \cdot \int_0^P(\sqrt{k^2(x)+1}-1)\
dx\geq\left(\int_0^P k(x)\ dx\right )^2=(2\pi-A)^2.
$$
This can be rewritten as
$$
(I-P)(I+P)\geq (2\pi-A)^2,
$$
and since $I\leq2\pi$ then
$$4\pi ^2 \geq I^2\geq P^2+A^2-4\pi A+4\pi ^2.
$$ Thus we end with the inequality
$$
0\geq P^2+A^2-4\pi A
$$ which is opposite to the isoperimetric on the sphere.

For the Plane, $K=0$, the inequality (\ref{independent}) looks even
simpler when one passes to the limit $\beta\rightarrow 0$:
$$2\pi A\geq\pi\int_0^P\frac{1}{k(x)}dx,$$
which is possible only for circles as was observed in \cite{W}.
Because by Cauchy-Schwartz one has
$$\int_0^P\frac{1}{k(x)}dx\geq\frac{P^2}{\int_0^P k(x)dx}=\frac{P^2}{2\pi} ,$$
contradicting the isoperimetric inequality in the plane.

\section{Magnetic billiards on the Hyperbolic plane}
On the Hyperbolic plane, $K=-1$, we shall proceed in a similar way
as before, dividing between the following cases where the mirror
formula (\ref{mequation}) looks differently depending on the
magnitude of the magnetic field:

Case1. Assume here that $\beta>1$. In this case the mirror equation
(\ref{mirrorplane}) looks exactly as in the Spherical and planar
case (\ref{mirrorplane}) but with $K=-1$. In this case using again
the lemma of \cite{B} we get
\begin{multline}
\label{h1} \sqrt{\beta^2-1}\cot
\frac{\sqrt{\beta^2-1}[a(x,\phi)+l(x_{-1},\phi_{-1})-a(x_{-1},\phi_{-1})]}{2}\leq
 \\ \leq \frac
{k(x)-\beta\cos\phi}{\sin\phi}.
\end{multline}Or equivalently

\begin{equation}\label{alah1}
 a(x,\phi)+l(x_{-1},\phi_{-1})-a(x_{-1},\phi_{-1})\geq
\frac{2}{\sqrt{\beta^2-1}}\arctan\frac
{\sqrt{\beta^2-1}\sin\phi}{k(x)-\beta\cos\phi}
\end{equation}

Integrate (\ref {alah1}) with respect to the invariant measure  $d
\mu=\sin\phi\ dxd\phi$. We get

\begin{equation}
\label{independenth1} \int l\ d\mu\ \geq\ \int_0^P dx\int_0^{\pi}
\frac{2}{\sqrt{\beta^2-1}}\arctan\frac
{\sqrt{\beta^2-1}\sin\phi}{k(x)-\beta\cos\phi}\ \sin\phi d\phi.
\end{equation}

Denote $I_1(x)$ the inner integral of (\ref{independenth1})

Case2. In this case $\beta=1$. In this case the effective curvature
$K+\beta^2$ vanishes and theorem \ref{a} implies
$$\frac{1}{a(x,\phi)}
+\frac{1}{l(x_{-1},\phi_{-1})-a(x_{-1},\phi_{-1})}=\frac
{2(k(x)-\cos\phi)}{\sin\phi}.
$$
Using the convexity of the function $\frac{1}{x}$ we get
\begin{equation*}
 \frac{2}{a(x,\phi)+l(x_{-1},\phi_{-1})-a(x_{-1},\phi_{-1})}\leq
\frac{(k(x)-\cos\phi)}{\sin\phi}
\end{equation*}
So that
\begin{equation}
\label{alah2} a(x,\phi)+l(x_{-1},\phi_{-1})-a(x_{-1},\phi_{-1})\geq
\frac{2\sin\phi}{k(x)-\cos\phi}
\end{equation}
Integrating with respect to the invariant measure we end up with the
inequality:
\begin{equation}
\label{independenth2} \int l\ d\mu\ \geq\ \int_0^P dx\int_0^{\pi}
\frac{2\sin\phi}{k(x)-\cos\phi}\ \sin\phi d\phi.
\end{equation}

Denote $I_2(x)$ the inner integral of (\ref{independenth2}).

Case3. In this last case $\beta\in (0,1)$. By theorem \ref{a} we
have
\begin{multline}
\label{mirrorhyp} \sqrt{1-\beta^2}\left(\frac{ \coth
(\sqrt{1-\beta^2}a(x,\phi))+\coth
(\sqrt{1-\beta^2}(l(x_{-1},\phi_{-1})-a(x_{-1},\phi_{-1}))}{2}\right)= \\
=\frac{k(x)-\beta \cos\phi}{\sin{\phi}}
\end{multline}

One can see that for the given $x$ the minimum of the right hand
side of (\ref{mirrorhyp}) equals $\sqrt{k^2-\beta^2}$ which is
attained for some angle $\phi\in (0,\pi)$. Comparing with the left
hand side, which is obviously strictly greater than
$\sqrt{1-\beta^2}$, we get
$$\sqrt{1-\beta^2}<\sqrt{k^2-\beta^2},$$
or equivalently
\begin{equation}
\label{horo}
 k(x)\geq 1
 \end{equation}
 So that
 the curve $\gamma$ must be convex with respect to horocycles.

Moreover, by the convexity of $coth$
\begin{multline}
\label{h3} \sqrt{1-\beta^2}\coth
\frac{\sqrt{1-\beta^2}[a(x,\phi)+l(x_{-1},\phi_{-1})-a(x_{-1},\phi_{-1})]}{2}\leq
 \\ \leq \frac
{k(x)-\beta\cos\phi}{\sin\phi}.
\end{multline}Or equivalently

\begin{equation}\label{alah3}
 a(x,\phi)+l(x_{-1},\phi_{-1})-a(x_{-1},\phi_{-1})\geq
\frac{2}{\sqrt{1-\beta^2}}\arctanh \frac
{\sqrt{1-\beta^2}\sin\phi}{k(x)-\beta\cos\phi}
\end{equation}

Integrate (\ref {alah3}) with respect to the invariant measure  $d
\mu=\sin\phi\ dxd\phi$. We get

\begin{equation}
\label{independenth3} \int l\ d\mu\ \geq\ \int_0^P dx\int_0^{\pi}
\frac{2}{\sqrt{1-\beta^2}}\arctanh\frac
{\sqrt{1-\beta^2}\sin\phi}{k(x)-\beta\cos\phi}\ \sin\phi d\phi.
\end{equation}

Denote $I_3(x)$ the inner integral of (\ref{independenth3})

Remarkably the following lemma holds true:
\begin{lemma}
All three integrals $I_1, I_2, I_3$ are independent on $\beta$ and
$$I_1(x)=I_2(x)=I_3(x)=2\pi(k(x)-\sqrt{k^2(x)-1}).$$
\end{lemma}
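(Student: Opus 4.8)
The plan is to evaluate the three inner integrals exactly as in the planar and spherical Lemma, namely by comparison with an auxiliary circle rather than by direct integration. Observe first that each of the integrands in (\ref{independenth1}), (\ref{independenth2}) and (\ref{independenth3}) involves the boundary only through the number $k=k(x)$, so $I_j(x)$ is a function of $k$ and of $\beta$ alone. Hence it suffices, for each fixed $k>1$, to compute this function along the circle $C_k$ on $S$ of constant geodesic curvature $k$. For $\beta$ in the range relevant to each case the admissibility condition $\beta<k$ holds, so the magnetic billiard inside $C_k$ is well defined and all three inequalities (\ref{h1}), (\ref{alah2}), (\ref{h3}) apply to it.

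The crucial observation is that for the rotationally symmetric billiard in $C_k$ each of these inequalities is in fact an equality. Indeed, along any trajectory in the disk bounded by $C_k$ the angle $\phi$ is preserved, the point $x_{-1}$ is a rotation of $x$, and the reflection symmetry of a magnetic-geodesic chord about its midpoint gives $a(x,\phi)=l(x_{-1},\phi_{-1})-a(x_{-1},\phi_{-1})$, i.e. the focusing occurs at the midpoint, $a=l/2$. Since $\cot$, $1/t$ and $\coth$ are strictly convex on the pertinent interval, the Jensen steps leading to (\ref{h1}), (\ref{alah2}), (\ref{h3}) collapse to equalities, and therefore equality holds throughout (\ref{independenth1}), (\ref{independenth2}), (\ref{independenth3}) for $C_k$.

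I would then close the argument by integrating the equality. By rotational symmetry the inner integral $I_j$ is constant in $x$, so the right-hand side of each equality equals $P\cdot I_j$, where $P$ is the perimeter of $C_k$; by the magnetic Santalo Lemma the left-hand side equals $2\pi A$, with $A$ the area enclosed by $C_k$. Hence $I_j=2\pi A/P$. As $A$ and $P$ are geometric quantities of $C_k$ that do not contain $\beta$, this establishes at once that every $I_j$ is independent of $\beta$ and that $I_1=I_2=I_3$, the three ranges of $\beta$ merely covering disjoint parts of the parameter line for one common value.

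Finally I would identify this value. Writing $k=\coth r$ for the circle of radius $r$ on the hyperbolic plane one has $P=2\pi\sinh r$ and $A=2\pi(\cosh r-1)$, so that $2\pi A/P=2\pi(\cosh r-1)/\sinh r$; using $\sqrt{k^2-1}=1/\sinh r$ this becomes $2\pi(\coth r-1/\sinh r)=2\pi(k-\sqrt{k^2-1})$, as stated. The only delicate point is the justification of $a=l/2$ for the symmetric billiard in $C_k$, i.e. that a symmetric infinitesimal beam focuses at the midpoint of its chord; everything else is plane geometry of the hyperbolic disk. As a consistency check one may instead set $\beta\to0$ in $I_3$ and recover directly the non-magnetic integral of \cite{B}.
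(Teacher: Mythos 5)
Your proposal is correct and takes essentially the same approach as the paper: pass to the circle of constant geodesic curvature $k(x)$, note that for this rotationally symmetric billiard the convexity inequalities become equalities (focusing at the chord midpoints, $a=l/2$), and combine rotational symmetry with the magnetic Santalo formula to conclude $I_j=2\pi A/P$, which contains no $\beta$. The only deviations are minor: you handle the case $\beta=1$ directly rather than as the limit $\beta\to1$ of $I_1,I_3$ (as the paper does), and you carry out the explicit evaluation $2\pi A/P=2\pi\bigl(k-\sqrt{k^2-1}\bigr)$ via $k=\coth r$, $P=2\pi\sinh r$, $A=2\pi(\cosh r-1)$, which the paper leaves as ``easily computed.''
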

\begin{proof}
This goes exactly like in the Spherical case. Indeed take a circle
on the Hyperbolic Plane of curvature $k=k(x)$, notice that such a
circle exists since in the first two Cases $k(x)>1$ just by the
geometric assumption and in Case3 this is obtained in (\ref{horo}).

For any circle both inequalities
(\ref{independenth1},\ref{independenth3}) becomes equalities. So
using rotational symmetry  both integrals $I_1,I_3$ can be easily
computed to be equal $\frac{2\pi A(x)}{P}$ which shows independence
of $\beta$. Moreover, it is clear that for $\beta\rightarrow1$ both
integrals $I_1,I_3$ tend to $I_2$, which can be easily computed.
This completes the proof of the lemma.
\end{proof}

It is easy to finish the section.
 All three inequalities of the Cases1,2,3
(\ref{independenth1}),(\ref{independenth2}),(\ref{independenth3})
lead by the "magnetic" Santalo formula and by the lemma to the same
inequality which does not contain $\beta$ anymore. We proceed like
in \cite{B}.
$$
A\geq\int_0^P( k(x)-\sqrt{k^2(x)-1})\ dx.
$$
Use Gauss-Bonnet to write it in the form
$$
A\geq 2\pi+A-\int_0^P\sqrt{k^2(x)-1}\ dx
$$
and therefore
$$
\int_0^P\sqrt{k^2(x)-1}\ dx \geq 2\pi
$$
On the other hand the last integral can be estimated from above by
the Cauchy-Schwartz
$$
\int_0^P\sqrt{k^2(x)-1}\ dx\leq \left(\int_0^P (k(x)-1)\ dx \int_0^P
(k(x)+1)\ dx \right)^{\frac{1}{2}}=
$$
$$
=((A+2\pi-P)(A+2\pi+P))^{\frac{1}{2}}
$$
where we applied Gauss Bonnet again. Thus we have the inequality
$$((A+2\pi-P)(A+2\pi+P))^{\frac{1}{2}}\geq 2\pi$$
which is equivalent to
$$A^2+4\pi A-P^2\geq 0.$$
But this is opposite to the isoperimetric inequality on the
Hyperbolic plane. Thus $\gamma$ must be a circle. This completes the
proof for the Hyperbolic plane.

\end{document}